\PassOptionsToPackage{dvipsnames}{xcolor}
\documentclass[10pt]{amsart}
\usepackage[margin=3cm]{geometry}
\usepackage{tikz}
\usepackage{amssymb}
\usetikzlibrary{arrows.meta}
\usetikzlibrary{positioning}
\usetikzlibrary{decorations.markings}
\usetikzlibrary{shapes.misc}
\usepackage{xcolor}
\definecolor{f1c1}{RGB}{173, 16, 90}
\definecolor{f1c2}{RGB}{11, 9, 86}
\definecolor{f1c3}{RGB}{63, 136, 129}
\definecolor{f4c1}{RGB}{172, 132, 42}
\definecolor{f4c2}{RGB}{166, 44, 206}
\definecolor{f5c1}{RGB}{235, 117, 163}
\definecolor{f7c1}{RGB}{234, 170, 154}
\definecolor{f7c2}{RGB}{172, 167, 196}
\definecolor{f7c3}{RGB}{179, 195, 207}
\usetikzlibrary{hobby}
\usetikzlibrary{patterns,patterns.meta}
\usepackage{float}

\usepackage[utf8]{inputenc} 
\usepackage{graphicx}
\usepackage{pgfplots}
\pgfplotsset{compat=1.18}
\usepackage{amsmath, amssymb, mathtools}
\usepackage{multicol}
\usepackage{hyperref}
\usepackage{cleveref}
\usepackage{dsfont}
\sloppy 
\usepackage{xcolor}
\usepackage{tikz}
\usetikzlibrary{intersections,positioning}

\newtheorem{theorem}{Theorem}[section]

\newtheorem{lemma}{Lemma}[section]

\newtheorem*{maintheorem*}{Main Theorem}
\allowdisplaybreaks
\numberwithin{equation}{section}

\usepackage{todonotes}


\newcommand{\R}{\mathbb{R}}

\newcommand{\eps}{\varepsilon}

\newcommand{\dd}{\,\mathrm{d}}

\begin{document}

	\title[Overdetermined free boundary problems]{Symmetry results for some overdetermined obstacle problems} 
	
	\subjclass[2020]{35J86, 35N25, 35R35, 35B06, 35B35, 49J40.}
	\keywords{Obstacle problem, Serrin-type problem, two-phase problem, overdetermined problem.}

	\author[N. De Nitti]{Nicola De Nitti}
	\address[N. De Nitti]{Friedrich-Alexander-Universit\"{a}t Erlangen-N\"{u}rnberg, Department of Mathematics, Chair for Dynamics, Control, Machine Learning and Numerics (Alexander von Humboldt Professorship), Cauerstr. 11, 91058 Erlangen, Germany.}
	\email{nicola.de.nitti@fau.de}

	\author[S. Sakaguchi]{Shigeru Sakaguchi}
	\address[S. Sakaguchi]{Graduate School of Information Sciences, Tohoku University, Sendai 980-8579, Japan.}
	\email{sigersak@tohoku.ac.jp}
	
	\begin{abstract}
We establish symmetry results for two categories of overdetermined obstacle problems: a Serrin-type problem and a two-phase problem under the overdetermination that the interface serves as a level surface of the solution. The first proof avoids the method of moving planes; instead, it leverages the comparison principle and the fact that the solution of the obstacle problem is superharmonic within its domain. The second proof utilizes a suitable version of Serrin's method of moving planes. 
	\end{abstract}

	\maketitle

	\section{Overdetermined obstacle problems}
	\label{sec:intro}
The main aim of this paper is to study the effect of overdetermination on two types of obstacle problems: first, a Serrin-type obstacle problem and, secondly, a two-phase obstacle problem under the assumption that the interface serves as a level surface of the solution. 

\subsection{Serrin-type obstacle problem} 
\label{ssec:intro-serrin}

Let \(\Omega\) be a bounded  domain  in \(\mathbb{R}^{N}\), with $N \ge 2$, such that \(0 \in \Omega\) and consider 
\begin{align}\label{eq:op}
\begin{cases}
\min \{-\Delta u, u-\psi\}=0 & \text { in } \Omega, \\
u=0 & \text { on } \partial \Omega,
\end{cases}
\end{align}
which can be equivalently rewritten as 
\begin{align*}
\begin{cases} 
-\Delta u \ge 0 & \text{ in } \Omega, \\
u \ge \psi & \text{ in } \Omega, \\
(u-\psi) \, \Delta u = 0 & \text{ in } \Omega, \\
u= 0 & \text{ on } \partial \Omega,
\end{cases}
\end{align*}
subject to the additional Neumann boundary  condition
\begin{equation} \label{eq:neumann}
\partial_\nu u = c  \quad  \text { on } \partial \Omega,
\end{equation}
where   $\nu$ denotes the outward-pointing unit normal vector to $\partial \Omega$, $c$ is a real constant,  and the obstacle function \(\psi \in C^{2}(\bar{\Omega})\) satisfies the following hypotheses: 
\begin{align}\label{ass:o1}
&\overline{\{x\in\Omega : \psi(x) > 0\}} \subset \Omega;\\
\label{ass:o2}&\max \psi>0 \ \text { and } \ \psi \text { is radially symmetric with respect to the origin}.
\end{align}
The condition \eqref{ass:o1} ensures the existence of solutions of \eqref{eq:op}; assumption \eqref{ass:o2}, on the other hand, plays a key role in the first symmetry result. 

\begin{theorem}[Serrin-type overdetermined obstacle problem]\label{th:main1}
Let us assume that \(\partial\Omega\) is of class $C^1$, \(0 \in \Omega\) and $\psi$ satisfies \eqref{ass:o1} and \eqref{ass:o2}. If $u \in C^{1,1}(\Omega)\cap C^1(\bar\Omega)$ is the solution of \eqref{eq:op} subject to \eqref{eq:neumann}, then $u$ is radially symmetric with respect to the origin and \(\Omega\) is a ball centered at the origin.
\end{theorem}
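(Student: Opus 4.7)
I would try to show that $\Omega$ is a ball centered at the origin by sandwiching $u$ between two radial obstacle-problem solutions on concentric balls. Set $R_1 := \min_{x\in\partial\Omega}|x|$ and $R_2 := \max_{x\in\partial\Omega}|x|$, and pick points $q,p \in \partial\Omega$ realizing these extrema; then $B_{R_1}(0) \subseteq \Omega \subseteq \overline{B_{R_2}(0)}$, with $\overline{B_{R_1}(0)}$ inscribed in $\overline{\Omega}$ and touching $\partial\Omega$ at $q$, and $\overline{B_{R_2}(0)}$ circumscribing $\overline{\Omega}$ and touching $\partial\Omega$ at $p$. The goal reduces to showing $R_1 = R_2$; once this holds, $\Omega = B_{R_1}(0)$ and the radial symmetry of $u$ follows from uniqueness for the obstacle problem with radial data.

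For each $R > 0$ with $\overline{\{\psi > 0\}} \subset B_R(0)$, let $v_R$ be the unique solution of the obstacle problem on $B_R(0)$ with obstacle $\psi$ and zero boundary data; by uniqueness and the radial symmetry of $\psi$, $v_R$ is radial. Assumption \eqref{ass:o1} ensures that the coincidence set $\{v_R = \psi\}$ is compactly contained in $B_R(0)$, so $v_R$ is harmonic in an open collar of $\partial B_R(0)$ and $\partial_\nu v_R = v_R'(R)$ there. The comparison principle for the obstacle problem (namely, that $v_R$ is the smallest superharmonic function lying above $\psi$ with the prescribed boundary trace) yields
\[
v_{R_1} \le u \text{ on } B_{R_1}(0), \qquad u \le v_{R_2} \text{ on } \Omega,
\]
because in each case the right-hand side is superharmonic on the smaller domain, lies above $\psi$, and dominates the left-hand side on the relevant boundary (where one of the two functions vanishes).

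Near $q$ and $p$ both $u$ and the relevant $v_R$ are harmonic (the coincidence sets being compactly contained in the interior), so the differences $u - v_{R_1}$ and $v_{R_2} - u$ are nonnegative harmonic functions vanishing at the tangency point. Since $\partial\Omega$ is $C^1$ and tangent to $\partial B_{R_i}(0)$ at $q,p$, the outward unit normals agree. Combining Hopf's boundary-point lemma — immediate at $q$ (the interior sphere $B_{R_1}(0) \subset \Omega$ is available) and more delicate at $p$ under only $C^1$ regularity — with the overdetermination $\partial_\nu u \equiv c$, one obtains
\[
c = \partial_\nu u(q) \le v_{R_1}'(R_1), \qquad v_{R_2}'(R_2) \le \partial_\nu u(p) = c,
\]
with strict inequalities unless $u$ coincides with the respective $v_R$, which would itself force $\Omega$ to be a ball centered at the origin and close the argument.

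The remaining and main step is to prove that $R \mapsto v_R'(R)$ is \emph{strictly increasing}; combined with the two inequalities above, this yields $v_{R_2}'(R_2) < v_{R_1}'(R_1)$ whenever $R_1 < R_2$, contradicting $v_{R_2}'(R_2) \le c \le v_{R_1}'(R_1)$ and hence forcing $R_1 = R_2$. The monotonicity should follow either from analysis of the radial Laplace ODE on the annulus where $v_R$ is harmonic, together with the free-boundary matching conditions $v_R(\rho) = \psi(\rho)$ and $v_R'(\rho) = \psi'(\rho)$, or from the subdomain monotonicity $v_R \le v_{R'}$ on $B_R(0)$ for $R < R'$ combined with Hopf's lemma applied at $\partial B_R(0)$. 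The two points I expect to be most delicate are: (i) justifying the strict Hopf inequality at $p \in \partial\Omega$ under the mere $C^1$ regularity (at $p$ the ball $B_{R_2}(0)$ provides an \emph{exterior}, not interior, tangent sphere to $\Omega$); and (ii) establishing the strict monotonicity of $v_R'(R)$ for a general radial obstacle $\psi$, in particular in cases where the coincidence set of $v_R$ need not be a single ball. Once $R_1 = R_2$ has been forced, uniqueness of the obstacle problem on the ball gives $u = v_{R_1}$, which is radial.
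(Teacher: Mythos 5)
Your overall strategy --- sandwiching $u$ between radial solutions $v_{R_1},v_{R_2}$ on the inscribed and circumscribed balls, comparing boundary normal derivatives with $c$, and then invoking monotonicity of $R\mapsto v_R'(R)$ to force $R_1=R_2$ --- is exactly the route the paper takes (with $v_{R_1}=u_\rho$, $v_{R_2}=u_R$). Two points, though. First, your concern (i) about Hopf at the circumscribed tangency point $p$ under only $C^1$ regularity is a red herring: since $v_{R_2}-u\ge 0$ in $\Omega$, vanishes at $p$, and both functions are $C^1$ up to $\partial\Omega$, a one-sided difference quotient along the inward normal (which equals the radial direction at $p$ by tangency) already gives the non-strict bound $v_{R_2}'(R_2)=\partial_\nu v_{R_2}(p)\le\partial_\nu u(p)=c$; likewise at $q$. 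Non-strict bounds suffice because the monotonicity in the next step is strict, so Hopf can be avoided entirely --- which is precisely what the paper does in \eqref{estimate by the large ball}--\eqref{estimate by the small ball}.

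Second --- and this is the genuine gap --- the ``remaining and main step'', the strict monotonicity of $R\mapsto v_R'(R)$, is not actually proved, and your two sketches do not close it. Your alternative using subdomain monotonicity $v_R\le v_{R'}$ plus Hopf at $\partial B_R(0)$ fails: $v_{R'}-v_R$ does not vanish on $\partial B_R(0)$ (since $v_{R'}(R)>0$), so Hopf gives nothing, and in any case it would compare derivatives at $\partial B_R$, not $v_R'(R)$ with $v_{R'}'(R')$. Your first alternative (radial ODE on the annulus plus free-boundary matching) is essentially right, but as you yourself flag, it is delicate when the coincidence set is not a single ball, and the needed extra ingredients are exactly the content of the paper's Step 2: (a) superharmonicity makes $r\mapsto r^{N-1}\partial_r v_R(r)$ non-increasing on $[0,R]$, and this quantity is constant on the annulus between $\partial B_{a_R}(0)$ and $\partial B_R(0)$, where $\overline{B_{a_R}(0)}$ is the smallest ball containing the coincidence set $\mathcal I_R$; (b) from $v_R\le v_{R'}$ one gets $\mathcal I_{R'}\subset\mathcal I_R$, hence $a_{R'}\le a_R$, and the sphere $\partial B_{a_{R'}}(0)$ lies in both coincidence sets (by radial symmetry), giving the $C^1$ matching $\partial_r v_R(a_{R'})=\partial_r\psi(a_{R'})=\partial_r v_{R'}(a_{R'})$. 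Chaining these yields
\begin{equation*}
-\partial_r v_R(R)=-\Bigl(\tfrac{a_R}{R}\Bigr)^{N-1}\partial_r v_R(a_R)\ \ge\ -\Bigl(\tfrac{a_{R'}}{R}\Bigr)^{N-1}\partial_r v_{R'}(a_{R'})\ =\ -\Bigl(\tfrac{R'}{R}\Bigr)^{N-1}\partial_r v_{R'}(R')\ >\ -\partial_r v_{R'}(R')
\end{equation*}
for $R<R'$, which is the paper's chain \eqref{chain of inequalities1}--\eqref{chain of inequalities3} and is the heart of the argument; without it your proof is incomplete. (Minor sign slip: strict increase of $R\mapsto v_R'(R)$ gives $v_{R_1}'(R_1)<v_{R_2}'(R_2)$, not the reverse, but the contradiction with $v_{R_2}'(R_2)\le c\le v_{R_1}'(R_1)$ stands either way.)
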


The solution of the obstacle problem \eqref{eq:op} can be seen as the \emph{smallest superharmonic function lying above the obstacle}: more precisely, it is the unique minimizer of the Dirichlet energy 
\begin{align*}
\mathcal E[w] := \int_\Omega |\nabla w|^2 \dd x
\end{align*}
among all functions $w$ belonging to the set 
$$\mathcal K[\psi] := \{w \in H^1_0(\Omega): w \ge \psi \text{ a.e. in } \Omega\},$$
which is closed, convex,
and non-empty. The minimizer $u \in \mathcal K[\psi]$ of the Dirichlet energy $\mathcal E$ can be also characterized via the following \emph{elliptic variational inequality}: 
\begin{align*}
\int_\Omega \nabla u \cdot \nabla (v-u) \dd x \ge 0 \quad \mbox{ for every } v \in \mathcal K[\psi].
\end{align*}
The associated \emph{Euler-Lagrange equations},
\begin{align*}
\begin{cases}
-\Delta u \ge 0 & \text{ in } \Omega, \\
-\Delta u = 0 & \text{ in } \{ u > \psi\}, \\
u\ge \psi & \text{ in } \Omega, \\
u= 0 & \text{ on } \partial \Omega,
\end{cases}\end{align*}
indicate that the domain $\Omega$  is split into two regions: $\mathcal \{u > \psi\}$, in which $u$
is harmonic, and the \emph{coincidence set} $\{u = \psi\}$, in which the solution coincides with the obstacle. The interface
$\partial \{u > \psi\}$ that separates these two regions is called the \emph{free boundary}.
Using the properties resulting from the Euler-Lagrange equations, we can, in turn, deduce the \emph{complementarity problem} formulated in \eqref{eq:op}.

We refer to the classical texts  \cite{MR0679313, MR1786735,MR2962060,MR880369,MR1094820}  as well as to the more recent book \cite[Chapter 5]{MR4560756} and surveys \cite{JEDP_2018_A2_0, MR3855748} for further information on the obstacle problem (its equivalent formulations, the regularity of solutions, the regularity of the free boundary, etc.). Let us only conclude by mentioning that indeed there exists a unique solution $u \in C^{1,1}( \Omega)\cap C^1(\bar\Omega)$ of \eqref{eq:op} provided $\partial\Omega$ is of class $C^{1,\alpha}$ for some $0 < \alpha < 1$. For instance, see in particular \cite[Theorem 6.1, p. 125]{MR1786735} or \cite[Theorem 2.14, p. 42]{MR2962060} for the $C^{1,1}$ regularity.

The symmetry result in Theorem \ref{th:main1} is in the same spirit as Serrin's symmetry result for the torsional rigidity \cite{MR333220}: namely, that the overdetermined boundary value problem: if $u$ is a solution of 
\begin{align*}
\begin{cases}
-\Delta u =1 & \text{ in }  \Omega, \\
u=0 & \text{ on } \partial \Omega, \\
\partial_\nu u =c & \text { on } \partial \Omega,
\end{cases}
\end{align*}
with $c \in \R$, then $\Omega$ is a ball of radius \(R>0\) and, up to translations, \(u(x)=\frac{R^2-|x|^2}{2N}\). 

The main tool of Serrin's original proof is Alexandrov's moving plane technique (see \cite{MR143162,MR0150710}); several alternative strategies have been developed over the years. We refer to \cite{MR3802818} for a survey.

On the other hand, in this paper, we prove Theorem \ref{th:main1} by a direct approach, relying only on the comparison principle and on the fact that the solution of \eqref{eq:op} is superharmonic in $\Omega$ and lies above the obstacle $\psi$. Such an approach for overdetermined boundary value problems associated with the Poisson equation was already used by Onodera in \cite{MR3312968}. As a  by-product of our proof strategy, we deduce the following stability result (which, in fact, implies Theorem \ref{th:main1}). 

\begin{theorem}[Stability for the Serrin-type overdetermined obstacle problem]\label{th:main1-2}
Let us assume that \(\partial\Omega\) is of class $C^1$, \(0 \in \Omega\), and $\psi$ satisfies \eqref{ass:o1} and \eqref{ass:o2}. Among balls centered at the origin, let $B_\rho(0), B_R(0)$ be the largest ball contained in $\Omega$ and the smallest ball containing $\Omega$, respectively. If $u \in C^{1,1}(\Omega)\cap C^1(\bar\Omega)$, the solution of \eqref{eq:op}, satisfies
\begin{equation}
\label{close to a constant}
|\partial_\nu u -c| \le \varepsilon\ \mbox{ on } \partial\Omega
\end{equation}
for some constants $c < 0 < \varepsilon$, then 
\begin{equation}
\label{stability}
R-\rho \le K\varepsilon,
\end{equation}
where $K > 0$ is a constant depending only on $N$, the diameter of $\Omega$, and the obstacle function $\psi$ over $\{ x \in \Omega : \psi(x) > 0\}$.
\end{theorem}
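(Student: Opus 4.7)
The plan is to bracket $\Omega$ between the two concentric balls $B_{\rho}(0)\subseteq\Omega\subseteq B_{R}(0)$ and compare $u$ to the \emph{radially symmetric} obstacle-problem solutions on these balls. For every $r>0$ with $\overline{\{\psi>0\}}\subset B_{r}(0)$, let $u_{r}$ denote the unique solution of the obstacle problem in $B_{r}(0)$ with zero boundary data; by the radial symmetry of $\psi$ and uniqueness, $u_r$ is itself radial, and so $\partial_\nu u_r$ takes a constant value $g(r)$ on $\partial B_r(0)$. Since $B_\rho(0)$ is the largest inscribed such ball and $B_R(0)$ the smallest circumscribed one, there exist touching points $x_\rho\in\partial B_\rho(0)\cap\partial\Omega$ and $x_R\in\partial B_R(0)\cap\partial\Omega$, at which the outward normal to $\partial\Omega$ coincides with the radial direction.

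Using the variational characterization of the obstacle solution as the smallest superharmonic function lying above $\psi$ with the prescribed boundary data, I obtain the barrier inequalities
\begin{align*}
u_\rho\le u\text{ in }B_\rho(0),\qquad u\le u_R\text{ in }\Omega,
\end{align*}
with equality (to zero) at $x_\rho$ and $x_R$, respectively. At these common-minimum points, the outward normal derivative of the nonnegative difference $u-u_\rho$ (resp.\ $u_R-u$) is nonpositive, so that
\begin{align*}
\partial_\nu u(x_\rho)\le g(\rho)\quad\text{and}\quad \partial_\nu u(x_R)\ge g(R).
\end{align*}
Combining these with the almost-overdetermination $|\partial_\nu u-c|\le\varepsilon$ on $\partial\Omega$ yields $g(\rho)\ge c-\varepsilon$ and $g(R)\le c+\varepsilon$.

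It remains to study the one-parameter family $r\mapsto g(r)$. In the annular non-coincidence region $s_r<|x|<r$ (with $s_r$ the outer radius of the coincidence set of $u_r$), $u_r$ is a radial harmonic function whose coefficients, together with $s_r$, are determined by the $C^{1,1}$-matching conditions at $|x|=s_r$ and $u_r(r)=0$. A direct computation (alternatively, a comparison-principle argument) shows that $g$ is $C^1$ and strictly increasing, with a Lipschitz lower bound $g'(r)\ge\lambda>0$, where $\lambda$ depends only on $N$, $\operatorname{diam}(\Omega)$, and $\psi|_{\{\psi>0\}}$. In particular, there is a unique $r^{*}>0$ with $g(r^*)=c$ (this is the ball case of Theorem~\ref{th:main1}). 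The bounds above then force $|\rho-r^{*}|,|R-r^{*}|\le\varepsilon/\lambda$, so that $R-\rho\le 2\varepsilon/\lambda=:K\varepsilon$; letting $\varepsilon\to 0$ recovers Theorem~\ref{th:main1} as a corollary.

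The main technical difficulty is establishing the uniform Lipschitz lower bound $g'(r)\ge\lambda>0$ on the bounded interval of admissible radii. This reduces to controlling how the free-boundary radius $s_r$ and the harmonic coefficient of $u_r$ depend on $r$, which is precisely where the dependence of $K$ on $\psi|_{\{\psi>0\}}$ originates; every other step is a soft consequence of the variational characterization of the obstacle problem and a one-sided comparison at the touching points.
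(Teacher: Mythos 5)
Your soft steps---bracketing $\Omega$ between $B_\rho(0)$ and $B_R(0)$, the comparison $u_\rho\le u\le u_R$, and the one-sided normal-derivative comparison at the touching points giving $g(\rho)\ge c-\varepsilon$ and $g(R)\le c+\varepsilon$---are correct and coincide with the paper's reasoning. The genuine gap is the crucial quantitative step: you assert that $r\mapsto g(r)=\partial_r u_r(r)$ is $C^1$ and strictly increasing with a uniform Lipschitz lower bound $g'(r)\ge\lambda>0$, claiming this follows from ``a direct computation (alternatively, a comparison-principle argument),'' but no such computation or argument is supplied, and, as you yourself note, this is precisely where all the difficulty sits. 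Differentiating $g$ in $r$ would require tracking how the free-boundary radius $s_r$ and the outer harmonic coefficients depend on $r$, which is delicate and not obviously $C^1$.

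The paper sidesteps this entirely and never differentiates $g$ in $r$. Since $u_r$ is radial, superharmonic in $B_r(0)$, and harmonic in the annulus $\{s_r<|x|<r\}$, the quantity $t^{N-1}\partial_t u_r(t)$ is nonincreasing on $[0,r]$ and constant on $[s_r,r]$. Applying this to both $u_\rho$ and $u_R$, using the nesting $\overline{B_{s_R}(0)}\subset\overline{B_{s_\rho}(0)}$ of the coincidence sets together with the $C^1$-matching $\partial_t u_\rho(s_R)=\partial_t\psi(s_R)=\partial_t u_R(s_R)$, one obtains the explicit chain \eqref{chain of inequalities1}--\eqref{chain of inequalities3}, which reads $-g(\rho)\ge(R/\rho)^{N-1}\bigl(-g(R)\bigr)$. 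Combined with $\bigl(-g(\rho)\bigr)-\bigl(-g(R)\bigr)\le 2\varepsilon$ and the elementary estimate $R^{N-1}-\rho^{N-1}\ge(N-1)\rho^{N-2}(R-\rho)$, this gives $R-\rho\le \tfrac{2R_*}{(N-1)(-g(R_*))}\,\varepsilon$ with $R_*$ the diameter of $\Omega$, yielding the constant $K$ with the stated dependencies without ever needing $g\in C^1$ or any control on $\partial_r s_r$. To complete your argument along your own lines you would have to actually prove the lower bound on $g'$ (or on the difference quotient of $g$), and the paper's algebraic identity, which is in effect an integrated form of exactly such a bound, is the cleanest way to do it.
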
 

Stability results related to Serrin's overdetermined problem (without obstacles) have been obtained by \cite{MR1729395, MR2436453}.

\subsection{Two-phase obstacle problems}
\label{ssec:two-phase}

Secondly, we study two-phase obstacle problems. Let $D$ be a bounded domain in  \(\mathbb{R}^{N}\), with $N \ge 2$, such that \(0 \in D\), and denote by $\sigma=\sigma(x)$, with $ x\in\mathbb R^N$, the conductivity distribution of the whole medium given by
\begin{equation}
\sigma(x) =\begin{cases}
\sigma_+& \text{ if } x \in D,\\
\sigma_-& \text{ if } x \in \mathbb R^N\setminus D,
\end{cases}\label{conductivity}
\end{equation}
where $\sigma_+, \sigma_-$ are positive constants with $\sigma_+\not=\sigma_-$. Given the obstacle function   \(\psi \in C^{2}(\bar{D})\cap C(\mathbb R^N)\) satisfying \eqref{ass:o2} and
\begin{equation}\label{supported inside D}
\mathrm{supp}(\max\{\psi, 0\}) :=\overline{\{ x \in \mathbb R^N : \psi(x) > 0\}} \subset D, 
\end{equation}
we consider the following problems: 
\begin{align}\label{eq:op2}
\begin{cases}
\min \{-\mbox{div}(\sigma\nabla u), u-\psi\}=0 & \text { in } B_L(0), \\
u=0 & \text { on } \partial B_L(0),
\end{cases}
\end{align}
where $L > 0$ is chosen to have $\overline{D}\subset B_L(0)$, and
\begin{align}\label{eq:op3}
\begin{cases}
\min \{-\mbox{div}(\sigma\nabla u), u-\psi\}=0 & \text { in } \mathbb R^N, \\
u \to 0 & \mbox{ as } |x| \to \infty.
\end{cases}
\end{align}
Here, $B_r(z)$ denotes an open ball in $\mathbb R^N$ with radius $r >0$ centered at a point $z$. The assumption \eqref{supported inside D} implies that the coincidence sets are contained in $D$.
The overdetermination in these problems is given by imposing the additional Dirichlet boundary  condition
\begin{equation} \label{eq:dirichlet}
 u = d \qquad  \text { on } \partial D, 
\end{equation}
for some constant $d > 0$.  We remark that  $0 < d < \max \psi$, since $u < \max \psi$ outside the largest ball centered at the origin and containing the support of $\max\{\psi, 0\}$ (see \eqref{shape of the summit of the solution} for a more precise estimate). Concerning these problems, we prove the following symmetry results.

\begin{theorem}[Overdetermined two-phase obstacle problem]\label{th:main2}
Let us assume that \(\partial D\) is of class $C^{2,\alpha}$ for some $0 < \alpha < 1$, \(0 \in D\), and $\psi$ satisfies \eqref{ass:o2} and \eqref{supported inside D}. If $u \in C^{1,1}(\overline{D})\cap C^{0,1}(\overline{B_L(0)})$ is the solution of \eqref{eq:op2} subject to \eqref{eq:dirichlet}, then $u$ is radially symmetric with respect to the origin and \(D\) is a ball centered at the origin.
\end{theorem}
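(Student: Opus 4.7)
The proof proceeds by the method of moving planes in an arbitrary direction $e \in S^{N-1}$, exploiting both the radial symmetry of $\psi$ and the fact that $B_L(0)$ is itself a ball centered at the origin. For $\lambda \in \mathbb{R}$, set $T_\lambda := \{x : x\cdot e = \lambda\}$, let $R_\lambda$ be the reflection across $T_\lambda$, and define $u_\lambda(x) := u(R_\lambda(x))$, $S_\lambda := B_L(0) \cap \{x\cdot e > \lambda\}$, and $\Sigma_\lambda := R_\lambda(S_\lambda)$. A direct calculation gives $\Sigma_\lambda \subset B_L(0)$ for every $\lambda \geq 0$, together with the identity $|R_\lambda(x)|^2 - |x|^2 = 4\lambda(\lambda - x\cdot e)$, which shows $|R_\lambda(x)| > |x|$ whenever $x \in \Sigma_\lambda$ and $\lambda > 0$. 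The overall plan is to establish $u(x) \geq u_\lambda(x)$ throughout $\Sigma_\lambda$ for every $\lambda \geq 0$; running the symmetric argument in direction $-e$ will then yield equality on each half of $B_L(0)$, radial symmetry of $u$ (as $e$ varies), and the conclusion that the level set $\partial D = \{u = d\}$ is a sphere centered at the origin.

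When $\lambda$ is slightly less than $L$, the sliver $\Sigma_\lambda$ lies in $B_L(0) \setminus \overline D$ and both $u$ and $u_\lambda$ are harmonic there. The boundary conditions $u \geq 0 = u_\lambda$ on $R_\lambda(\partial B_L(0) \cap S_\lambda)$ (the reflected sphere cap, now inside $B_L(0)$) together with $u = u_\lambda$ on $T_\lambda$ imply $u \geq u_\lambda$ in $\Sigma_\lambda$ via the maximum principle. We then push $\lambda$ downward and consider the critical value
\[
\lambda^*(e) := \inf\left\{ \lambda\geq 0 :\ u\geq u_\mu\text{ in }\Sigma_\mu\text{ and }R_\mu\bigl(D\cap\{x\cdot e>\mu\}\bigr)\subset D\text{ for every }\mu\in[\lambda,L]\right\}.
\]
Arguing by contradiction, suppose $\lambda^*(e) > 0$. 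Then, by the standard Serrin dichotomy, at $\lambda=\lambda^*$ either (a) the reflected cap $R_{\lambda^*}(D\cap\{x\cdot e>\lambda^*\})$ touches $\partial D$ tangentially at some point $p_0\notin T_{\lambda^*}$, or (b) the hyperplane $T_{\lambda^*}$ is orthogonal to $\partial D$ at some point $p_0\in \partial D \cap T_{\lambda^*}$.

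In both (a) and (b) the goal is to upgrade the inequality $u\geq u_{\lambda^*}$ on $\Sigma_{\lambda^*}$ to the identity $u\equiv u_{\lambda^*}$, by applying a strong maximum principle (case (a)) or Serrin's corner lemma (case (b)) to the nonnegative function $u - u_{\lambda^*}$, which vanishes at $p_0$ since $u(p_0) = u_{\lambda^*}(p_0) = d$. This is the principal technical step: the argument must accommodate both (i) the two-phase interface $\partial D$, where the overdetermination $u=d$ forces $\nabla u$ to be normal to $\partial D$ on each side and the transmission condition $\sigma_+\partial_\nu u|_-=\sigma_-\partial_\nu u|_+$ is used to transfer the equality of $u$ and $u_{\lambda^*}$ across the jump in $\sigma$; and (ii) the free boundary of the coincidence set, where the radial monotonicity of $\psi$ combined with $|R_{\lambda^*}(x)|>|x|$ on $\Sigma_{\lambda^*}$ yields the pointwise inequality $\psi\circ R_{\lambda^*}\leq \psi$ in $\Sigma_{\lambda^*}$, making the obstacle constraint satisfied by $u_{\lambda^*}$ compatible with that of $u$.

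Once $u\equiv u_{\lambda^*}$ on $\Sigma_{\lambda^*}$ is in hand, we derive the desired contradiction: for any $y \in \partial B_L(0)\cap\{y\cdot e>\lambda^*\}$ one has $u(R_{\lambda^*}(y)) = u(y) = 0$, but $R_{\lambda^*}(y)$ lies strictly inside $B_L(0)$ since $\lambda^*>0$, contradicting $u > 0$ in $B_L(0)$ (which follows from the strong maximum principle applied to the supersolution $u$ of $-\dv{\sigma\nabla u}=0$, given $u = 0$ on $\partial B_L(0)$ and $u = d > 0$ on $\partial D$). Hence $\lambda^*(e)\leq 0$, and the analogous argument in direction $-e$ gives $\lambda^*(-e)\leq 0$, so $u$ is symmetric across $\{x\cdot e = 0\}$; since $e\in S^{N-1}$ is arbitrary, $u$ is radial about the origin, and therefore $D = B_r(0)$ for some $r>0$. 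The main obstacle is thus the adaptation of Serrin's corner lemma and strong maximum principle to the simultaneous presence of the two-phase transmission interface and the free boundary of the obstacle problem, which is the step where the \emph{suitable} version of the method of moving planes is required.
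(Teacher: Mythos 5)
Your proposal takes a genuinely different route from the paper's and contains a serious gap at the key step. The paper moves planes through the interface domain $D$ alone, decomposes the solution into $u^+ = u|_{\overline D}$ and $u^- = u|_{\overline{B_L(0)}\setminus D}$, and sets up the reflected differences $w^+ = u^+ - u^+(\cdot^{\lambda_*})$ on the reflected cap $D^{\lambda_*}\subset D$ and $w^- = u^- - u^-(\cdot^{\lambda_*})$ on a suitable component $\Sigma$ of the reflected exterior region; after establishing $w^\pm\geq 0$ separately (using penalized approximations $v^\varepsilon$ to handle the obstacle constraint for $w^+$), the contradiction at the touching point $p$ is derived by applying Hopf's boundary point lemma (or Serrin's corner lemma) to $w^+$ and $w^-$ \emph{individually} and comparing their normal derivatives through the transmission condition $\sigma_+\partial_\nu u^+ = \sigma_-\partial_\nu u^-$. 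Your proposal instead moves planes through the whole ball $B_L(0)$ and tries to apply a strong maximum principle directly to the global difference $u - u_{\lambda^*}$ at the touching point $p_0 \in \partial D\cap R_{\lambda^*}(\partial D)$. This step is not justified: near $p_0$, $u$ satisfies $-\dv{\sigma\nabla u}=0$ weakly while $u_{\lambda^*}$ satisfies $-\dv{(\sigma\circ R_{\lambda^*})\nabla u_{\lambda^*}}=0$, and these two discontinuous coefficient fields disagree precisely on the sliver between $\partial D$ and $R_{\lambda^*}(\partial D)$, with the conormal jumps of $u$ and $u_{\lambda^*}$ occurring on \emph{different} hypersurfaces through $p_0$. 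Consequently $u - u_{\lambda^*}$ is not a supersolution of any single uniformly elliptic divergence-form equation in a neighborhood of $p_0$, and the strong maximum principle cannot be invoked directly. You label this the ``principal technical step,'' but offer no mechanism to carry it out; the paper's two-sided Hopf argument played against the transmission condition is exactly the device needed here, and it is structurally incompatible with treating $u-u_{\lambda^*}$ as a single function.

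A second, concrete error: you invoke ``the radial monotonicity of $\psi$'' to conclude $\psi\circ R_{\lambda^*}\leq\psi$ on $\Sigma_{\lambda^*}$, but assumption \eqref{ass:o2} only stipulates that $\psi$ is radially symmetric with $\max\psi>0$ --- it need not be radially nonincreasing (cf.\ Figure~\ref{fig:psi-c}, which depicts an oscillating $\psi$). The paper sidesteps this by observing that replacing the obstacle $\psi$ with the radially nonincreasing auxiliary solution $u_\rho$ of the obstacle problem in $B_\rho(0)$ leaves $u$ unchanged, and then runs the penalized moving-plane comparison against $u_\rho$; your argument would need the same substitution.
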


\begin{theorem}[Overdetermined two-phase obstacle problem in $\R^N$]\label{th:main3} Let us assume that $N \ge 3$, \(\partial D\) is of class $C^{2,\alpha}$ for some $0 < \alpha < 1$, \(0 \in D\), and $\psi$ satisfies \eqref{ass:o2} and \eqref{supported inside D}. If $u \in C^{1,1}(\overline{D})\cap C^{0,1}(\mathbb R^N)$ is the solution of \eqref{eq:op3} subject to \eqref{eq:dirichlet}, then $u$ is radially symmetric with respect to the origin and \(D\) is a ball centered at the origin.
\end{theorem}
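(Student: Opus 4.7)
The plan is to adapt Serrin's moving planes, applied directly to the global solution $u$ on $\mathbb{R}^N$, along the lines of the proof of Theorem~\ref{th:main2} but with the Dirichlet condition on $\partial B_L(0)$ replaced by the decay at infinity (which is why $N \geq 3$ enters). Fix a direction, say $e_1$, and write $T_\lambda = \{x_1 = \lambda\}$, $\Sigma_\lambda = \{x_1 > \lambda\}$, $x^\lambda = (2\lambda - x_1, x_2, \ldots, x_N)$, $u_\lambda(x) := u(x^\lambda)$, and let $D_\lambda$ and $\psi_\lambda(x) := \psi(|x^\lambda|)$ denote the reflected domain and obstacle; then $u_\lambda$ solves the two-phase obstacle problem on $\mathbb{R}^N$ with data $(D_\lambda, \sigma_\lambda, \psi_\lambda)$. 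A preliminary observation I would record first is that $\partial D$ is exactly the level set $\{u = d\}$: the superharmonicity of $u$ inside $D$ gives $u \geq d$ there; the strong minimum principle combined with $u \geq \psi$ and $\max \psi > d$ excludes an interior touching $u = d$; and in the exterior the maximum principle for harmonic functions vanishing at infinity gives $u < d$. Hence radial symmetry of $u$ will automatically force $D$ to be a ball centered at the origin.

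The core moving-plane argument then proceeds in two steps. First, I would verify a starting position: for every $\lambda \geq \lambda_0 := \sup_{x \in \bar{D}} x_1$ the cap $\Sigma_\lambda$ does not meet $\bar{D}$, so $u$ is harmonic on $\Sigma_\lambda$, while $u_\lambda$ solves the reflected obstacle problem with $D_\lambda \subset \Sigma_\lambda$. The difference $w := u_\lambda - u$ vanishes on $T_\lambda$ and at infinity, is harmonic on $\Sigma_\lambda \setminus \bar{D}_\lambda$, is superharmonic on $D_\lambda$, and its distributional divergence reflects the transmission identity across $\partial D_\lambda$; a weak maximum principle, using the decay $u(x) = O(|x|^{2-N})$, gives $w \geq 0$ on $\Sigma_\lambda$. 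Having secured a starting configuration, I would define the critical value
\begin{align*}
\lambda^* := \inf\{\bar{\lambda} \in \mathbb{R} : u_\lambda \geq u \text{ on } \Sigma_\lambda \text{ for every } \lambda \geq \bar{\lambda}\},
\end{align*}
and argue $\lambda^* \leq 0$. At the critical position, either an interior touching $u_{\lambda^*}(x_0) = u(x_0)$ with $x_0 \in \Sigma_{\lambda^*}$ occurs, or $D_{\lambda^*}$ is internally tangent to $D$ on $T_{\lambda^*}$. In the interior case, the strong maximum principle applied separately in $D_{\lambda^*}$ and in $\Sigma_{\lambda^*} \setminus \bar{D}_{\lambda^*}$, glued via the transmission condition $\sigma_+ \partial_\nu u|_+ = \sigma_- \partial_\nu u|_-$, yields $w \equiv 0$ on $\Sigma_{\lambda^*}$; in the tangency case, a Serrin-type boundary-point lemma (available because $\partial D = \{u=d\}$ and $\partial D_{\lambda^*} = \{u_{\lambda^*}=d\}$ meet tangentially) gives the same conclusion. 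In either scenario $u$ is symmetric across $T_{\lambda^*}$; if $\lambda^* > 0$, the symmetry would propagate to $\psi$, giving $\psi(x) = \psi(|x^{\lambda^*}|)$, and combined with the radial symmetry of $\psi$ about $0$ and $\max \psi > 0$ this is impossible. Running the analogous argument in the $-e_1$ direction produces a matching critical value equal to $0$ on the left, hence $u \equiv u_0$ in $\mathbb{R}^N$; since $e_1$ was arbitrary, $u$ is radial about $0$, and by the first paragraph $D$ is the ball enclosed by $\{u = d\}$.

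The principal technical hurdle will be the jump of $\partial_\nu u_\lambda$ across $\partial D_\lambda$, which obstructs any immediate application of the strong maximum principle and the Hopf lemma to $w = u_\lambda - u$. As in the proof of Theorem~\ref{th:main2}, I would resolve this by applying these tools phase-by-phase and recombining via the transmission identity; the novelty for Theorem~\ref{th:main3} is that the outer Dirichlet data on $\partial B_L(0)$ used there must be replaced by the capacity-type decay at infinity, which is precisely the reason the hypothesis $N \geq 3$ is imposed.
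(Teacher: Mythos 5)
Your overall route—Serrin's moving planes, decay at infinity for $N \ge 3$ replacing the outer Dirichlet data, and exploiting the transmission conditions across $\partial D$—is the same strategy the paper uses (the paper literally says: repeat the proof of Theorem~\ref{th:main2} with $u^-$, $u_\infty$, and the decay at infinity replacing $u^-$, $u_L$, and the outer boundary condition). But there are two genuine gaps in the way you execute the moving-plane step, and both concern precisely the difficulties that the paper is careful to engineer around.

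First, your "starting position" and "critical position" arguments treat the global difference $w = u_\lambda - u$ on the half-space $\Sigma_\lambda$ and assert that a weak maximum principle gives $w\ge 0$ because $w$ is harmonic in $\Sigma_\lambda\setminus\bar D_\lambda$ and superharmonic in $D_\lambda$ "with its distributional divergence reflecting the transmission identity." That does not close: across $\partial D_\lambda$ the distributional Laplacian of $u_\lambda$ picks up a surface measure whose density is $(\sigma_+/\sigma_- -1)\,\partial_\nu u_\lambda$, and across $\partial D\cap\Sigma_\lambda$ the same happens to $u$; neither sign is controlled without extra structure, and the two interfaces $\partial D$ and $\partial D_\lambda$ do not coincide away from the hyperplane. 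Writing the operator as $L_\lambda = -\operatorname{div}(\sigma_\lambda\nabla\cdot)$ does not rescue this either, because then $L_\lambda u$ acquires a surface term $(\sigma_+-\sigma_-)\partial_\nu u\,\mathcal H^{N-1}|_{\partial D_\lambda}$ whose sign depends on $\sigma_+\gtrless\sigma_-$ and on the geometry. The paper avoids this entirely by never comparing $u$ and its reflection across a phase interface: it works separately with $w^+=u^+(x)-u^+(x^{\lambda_*})$ on the reflected cap $D^{\lambda_*}\subset D$ (one phase, conductivity $\sigma_+$ on both sides) and $w^-=u^-(x)-u^-(x^{\lambda_*})$ on a suitable component $\Sigma$ of the exterior reflection (one phase, conductivity $\sigma_-$), and only then plays the two normal derivatives against each other through the transmission identity at the touching point. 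Your closing paragraph gestures at "phase-by-phase," but your core moving-plane paragraph is written for the global $w$, which is where the argument actually breaks.

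Second, even inside a single phase, you apply the strong maximum principle and Hopf's lemma directly to $w^+$, which is a difference of $C^{1,1}$ obstacle-problem solutions and therefore does not satisfy any useful linear elliptic inequality where the two coincidence sets disagree. This is the reason the paper introduces the penalized approximations $v^\varepsilon$ (Step 3): replacing $\psi$ by the radially symmetric monotone $u_\rho$ (which leaves $u$ unchanged), the difference $w^\varepsilon(x)=v^\varepsilon(x)-v^\varepsilon(x^{\lambda_*})$ satisfies $-\Delta w^\varepsilon + c_\varepsilon(x)w^\varepsilon \ge 0$ with $c_\varepsilon\ge 0$ on $D^{\lambda_*}$, precisely because $u_\rho(x)\ge u_\rho(x^{\lambda_*})$ there and $\beta_\varepsilon$ is monotone; one then lets $\varepsilon\to 0^+$ to get $w^+\ge 0$. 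Your proposal omits the penalization (and the replacement of $\psi$ by $u_\rho$), so the key inequality $w^+\ge 0$ in the reflected cap is not established. If you repair these two points—phase separation and penalization—you essentially recover the paper's argument, including the analogue of Lemma~\ref{le:connected1}/\ref{le:connected2} needed to guarantee $w^-\not\equiv 0$ and the connectedness of the exterior.
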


The case where $N=2$ in Theorem \ref{th:main3} is excluded since problem \eqref{eq:op3} has no solutions in two dimensions. Theorems \ref{th:main2}  and \ref{th:main3} are proven by
the method of moving planes due to Serrin with the aid of approximate solutions via penalized equations.  Such an approach for overdetermined two-phase boundary value problems (without obstacles)  was already introduced by Kang and the second author in \cite{MR4517773}.

\subsection*{Outline}

The following three sections are devoted to the proofs of Theorems \ref{th:main1}, \ref{th:main1-2}, \ref{th:main2}, and \ref{th:main3}.

\section{Proof of Theorems \ref{th:main1} and \ref{th:main1-2}}
\label{sec:proof1}

\begin{proof}[Proof of Theorem \ref{th:main1}]

\noindent
\textbf{Step 1.} \emph{Auxiliary problems in symmetric domains.} Let $u$ be the solution of \eqref{eq:op} subject to \eqref{eq:neumann}.
Let us fix \(0<\rho  \leq R<\infty\) satisfying 
\begin{align*}
B_\rho(0) \subset \Omega &\quad \text{ and }\quad \partial B_\rho(0) \cap \partial \Omega \neq \emptyset, \\
B_{R}(0) \supset \Omega  &\quad \text{ and }\quad \partial B_R(0) \cap \partial \Omega \neq \emptyset;
\end{align*}
since $\psi$ is assumed to be radially symmetric, we can also suppose that the support of its positive part $\max\{\psi, 0\}$ is contained in $B_\rho(0)$.
Let \(u_{\rho}\) and \(u_{R}\) be the solutions of the obstacle problems \eqref{eq:op} where $\Omega$ is replaced by the domains \(B_{\rho}(0)\) and \(B_{R}(0)\), respectively. By the comparison principle, we have 
\begin{align*}
\begin{cases}
u_{\rho} \leq u \leq u_{R} & \text { in } B_{\rho}(0), \\
u \leq u_{R} & \text { in } \Omega .
\end{cases}
\end{align*}

\begin{figure}[h]
	\centering
\begin{tikzpicture}[ultra thick, font = \Large, scale=0.6]
\draw [pattern=crosshatch, pattern color=blue] (0,0) circle (5cm);
\path[xshift = -5cm, yshift = -5cm, draw = black, fill = pink, fill opacity=1, use Hobby shortcut, closed=true]
(7.381, 4.616) .. (8.504, 6.233) .. (8.449, 7.767) .. (7.025, 8.671) .. (4.888, 9.328) .. (2.889, 8.917) .. (1.519, 7.657) .. (0.807, 5.493) .. (0.533, 3.932) .. (0.697, 2.808) .. (2.423, 1.658) .. (3.71, 1.055) .. (5.08, 0.973) .. (6.231, 1.658) .. (6.888, 2.945);
\path[xshift = -5cm, yshift = -5cm, draw = black,  use Hobby shortcut, closed=true, fill=white, fill opacity=1 ]
(2.204, 5.959) .. (1.738, 5.329) .. (1.711, 5.0) .. (1.848, 4.671) .. (1.985, 4.123) .. (2.642, 3.822) .. (2.943, 4.562) .. (2.532, 5.055) .. (2.286, 5.411) .. (2.286, 5.931);
\path[xshift = -5cm, yshift = -5cm, draw = black,  use Hobby shortcut, closed=true, fill=white, pattern=crosshatch, pattern color=blue, fill opacity=1 ]
(2.204, 5.959) .. (1.738, 5.329) .. (1.711, 5.0) .. (1.848, 4.671) .. (1.985, 4.123) .. (2.642, 3.822) .. (2.943, 4.562) .. (2.532, 5.055) .. (2.286, 5.411) .. (2.286, 5.931);
\draw [fill=white]  (0,0) circle (2cm);
\draw [pattern=north west lines, pattern color=green!50!black]  (0,0) circle (2cm);
\draw [-latex] (0,0) node [below left] {0} -- node [above] {$\rho$} (2,0);
\draw [-latex] (0,0) -- (70:5) node [below = .8cm] {$R$};
\node at (-2,-2) {$\Omega$};
\end{tikzpicture}
\caption{Example of $\Omega$, $B_\rho(0)$ and $B_{R}(0)$.}
\label{fig:sets}
\end{figure}
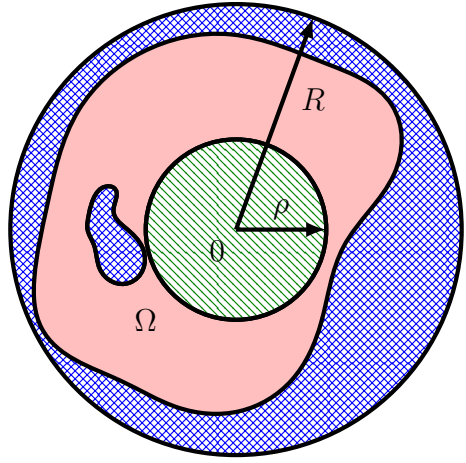

For the coincidence sets 
\begin{align*}
\mathcal I &=\{x \in \Omega: u(x)=\psi(x)\}, \\
\mathcal I_{\rho} &=\left\{x \in B_{\rho}(0): u_{\rho}(x)=\psi(x)\right\}, \\
\mathcal I_{R} &=\left\{x \in B_{R}(0): u_{R}(x)=\psi(x)\right\},
\end{align*}
the following inclusions hold: 
$$\mathcal I_{\rho} \supset  \mathcal I \supset \mathcal  I_{R}.$$

Let $\overline{B_{a}(0)}$ and $\overline{B_{b}(0)}$ be the smallest balls containing \(\mathcal I_{\rho}\) and \(\mathcal I_{R}\), respectively.
Then, since $B_R(0)\supset B_\rho(0)\supset\mathcal I_{\rho} \supset \mathcal  I_{R},$
\begin{equation}\label{order of key numbers}
0 < b \le a< \rho\le R.
\end{equation}
Moreover, since  \(u_{\rho}, u_{R}\) are radially symmetric and superharmonic in  \(B_{\rho}(0)\) and \(B_{R}(0)\), respectively, we notice that 
\begin{equation}
\label{key monotonicity}
 r^{N-1} \partial_r u_\rho, r^{N-1} \partial_r u_R \mbox{ are monotone nonincreasing in }  [0,\rho] \mbox{ and in } [0, R], \mbox{respectively}.
\end{equation}
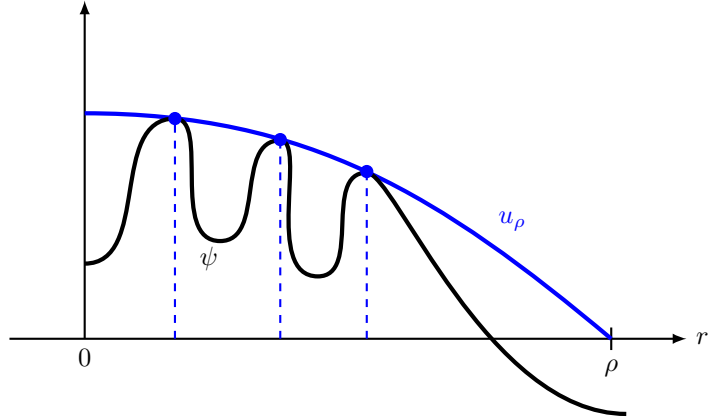
\begin{figure}[ht]
	\centering
	\begin{tikzpicture}[thick]
	
	\draw [-latex] (-1,0) -- (8,0) node [right] {$r$};
	\draw [-latex] (0,0) node [below] {$0$} -- (0,4.5);
	\draw (7,.15) -- +(0,-.3) node [below] {$\rho$};
	\draw [ultra thick,blue] (0,3) to [out = 0, in = 140] node [pos = .8, above = 0.25 cm] {$u_\rho$} node [black, pos = .2, below = 1.5cm] {$\psi$} (7,0);
	\draw [ultra thick,black] (0,1) to [out = 0, in = 180] (1.2,2.93) to [out = 0, in = 180, out looseness = .7] (1.8,1.3) to [out = 0, in = 180] (2.6,2.64) to [out = 0, in = 180, out looseness = .5] (3.1,.83) to [out = 0, in = 180] (3.75,2.21) to [out = 0, in = 180, out looseness = .2] (7.2,-1);
	\draw [blue, dashed] (1.2,0) node [below] {\,} -- +(0,2.93);
	\draw [blue, dashed] (2.6,0) node [below] {\,} -- +(0,2.65);
	\draw [blue, dashed] (3.75,0) node [below] {\,} -- +(0,2.225);
	\fill [blue] (1.2,2.93) circle (2.5pt);
	\fill [blue] (2.6,2.65) circle (2.5pt);
	\fill [blue] (3.75,2.225) circle (2.5pt);
	\end{tikzpicture}
	\caption{Monotonicity of $u_\rho$}
	\label{fig:psi-c}
\end{figure}
Hence, in particular
\begin{equation}
\label{strict inequalities without Hopf's boundary point lemma}
\partial_r u_\rho(\rho) < 0  \quad \mbox{ and } \quad \partial_r u_R(R) < 0,
\end{equation}
since  $u_R(0)= u_\rho(0)>0 $ and $u_R(R)= u_\rho(\rho)=0$.
Our aim is to show that, in fact, we can choose $\rho = R$ and thus $\Omega$ is a ball.

\textbf{Step 2.} \emph{Monotonicity-type result.} Since outside of the coincidence set the solution of the obstacle problem is harmonic, namely 
\begin{align*}
-\Delta u_{\rho}&=0 \quad \text { in } B_{\rho}(0) \setminus \overline{B_{a}(0)},\\
-\Delta u_{R}&=0  \quad \text { in } B_{R}(0) \setminus B_{b}(0), 
\end{align*}
we have that 
$u=u_{\rho}(r) \text { or } u_{R}(r)$  satisfies 
\begin{align*}
\partial_r\left( r^{N-1}\partial_r u\right) = 0 \quad  \text{(with } r=|x|).
\end{align*}
Thus, we compute  
\begin{align}
0&=\int_{a}^{\rho}\partial_r\left(r^{N-1} \partial_r u_{\rho}\right) \dd r=\rho^{N-1} \partial_r u_{\rho}(\rho)-a^{N-1} \partial_r u_{\rho}(a), \label{eq for rho}\\
0&=\int_{b}^{R}\partial_r\left(r^{N-1} \partial_r u_{R}\right) \dd r=R^{N-1} \partial_r u_{R}(R)-b^{N-1} \partial_r u_{R}(b).\label{eq for R}
\end{align}
We claim the following inequalities hold: 
 \begin{equation}\label{monotonicity of radial solutions}
  \partial_r u_{\rho}(\rho) \leq \partial_r u_{R}(R) < 0.
  \end{equation}
Indeed, by combining \eqref{strict inequalities without Hopf's boundary point lemma}, \eqref{eq for rho}, \eqref{eq for R}, \eqref{key monotonicity} with \eqref{order of key numbers}, we compute 
\begin{align}
-\partial_r u_{\rho}(\rho)&=-\left(\frac{a}{\rho}\right)^{N-1} \partial_r u_{\rho}(a) \ge -\left(\frac{b}{\rho}\right)^{N-1} \partial_r u_{\rho}(b)\label{chain of inequalities1} \\
&=-\left(\frac{b}{\rho}\right)^{N-1} \partial_r \psi(b) = -\left(\frac{b}{\rho}\right)^{N-1} \partial_r u_R(b)=-\left(\frac{R}{\rho}\right)^{N-1} \partial_r u_R(R)\label{chain of inequalities2}\\ 
&\ge -\partial_r u_{R} (R)\quad  ( > 0). \label{chain of inequalities3}
\end{align}

\textbf{Step 3.} \emph{Conclusion of the argument.} Since \(u \leq u_{R}\) in \(\Omega\) and \(u=0=u_{R}\) on \(\partial \Omega \cap \partial B_{R}(0)\neq \emptyset\), 
we have
\begin{equation} 
\label{estimate by the large ball}
-\partial_r u_{R}(R)=-\partial_\nu u_{R} \geqslant-\partial_\nu u=-c \text{ on } \partial \Omega \cap  \partial B_{R}(0).
\end{equation} 
Similarly, since \(u_{\rho} \leq u\) in \(B_{\rho}(0)\) and
\(u_{\rho}=0=u\) on \(\partial \Omega \cap \partial B_{\rho}(0)\neq \emptyset\),
we have
\begin{equation}
\label{estimate by the small ball}
-\partial_r u_{\rho} (\rho)=-\partial_\nu u_{\rho} \leq-\partial_\nu u=-c \text { on } \partial \Omega \cap \partial B_{\rho}(0).
\end{equation}
From this, we conclude \(-\partial_r u_{\rho}(\rho) \leq -c \leq-\partial_r u_{\rho}(R)\). Furthermore,  it follows from \eqref{monotonicity of radial solutions} that
\begin{align*}
-\partial_r u_{\rho}(\rho)=-c=- \partial_r u_{R}(R),
\end{align*}
which implies the equalities in \eqref{chain of inequalities1}--\eqref{chain of inequalities3} and, hence,
\begin{align*}
\rho=R.
\end{align*}
Thus \(\Omega=B_{\rho}(0)=B_{R}(0)\). 
\end{proof}

\begin{proof}[Proof of Theorem \ref{th:main1-2}] The computations \eqref{chain of inequalities1}--\eqref{chain of inequalities3} yield  
$$
-\partial_r u_{s} (s) > -\partial_r u_{t} (t) > 0 \quad \text{ if } 0 < s < t.
$$
By virtue of arguments in \eqref{chain of inequalities1}--\eqref{estimate by the small ball} with the assumption \eqref{close to a constant}, we deduce
$$
-c+\varepsilon\ge -\partial_r u_{\rho}(\rho)\ge -\left(\frac{R}{\rho}\right)^{N-1} \partial_r u_R(R) \ge -\partial_r u_{R} (R) > -c-\varepsilon
$$
and, hence,
$$
2\varepsilon \ge \frac{(R^{N-1}-\rho^{N-1})(- \partial_r u_R(R) )}{\rho^{N-1}}.
$$
Thus, 
$$
R-\rho\le  \frac{2\rho^{N-1}(R-\rho)}{(R^{N-1}-\rho^{N-1})(- \partial_r u_R(R) )}  \varepsilon\le \frac {2\rho}{(N-1)(- \partial_r u_R(R) )}  \varepsilon \le \frac {2R}{(N-1)(- \partial_r u_R(R) )}\varepsilon.
$$
Therefore, it suffices to set $K=\frac {2R_*}{(N-1)(- \partial_r u_{R_*}(R_*) )}$, where $R_*$ is the diameter of $\Omega$ since $R_* \ge R$.
\end{proof}


\section{Proof of Theorem \ref{th:main2}}
\label{sec:proof2}
\begin{proof}[Proof of Theorem \ref{th:main2}]

\textbf{Step 1.} \emph{Transmission conditions.}
 Let $u$ be the solution of \eqref{eq:op2} subject to \eqref{eq:dirichlet}.  We define the functions $u^\pm=u^\pm(x)$ by
 \begin{equation}
 \label{upm}
 u^+=u \ \mbox{ in } \overline{D}\quad \mbox{ and }\quad u^-= u \ \mbox{ in } \overline{B_L(0)}\setminus D.
 \end{equation}
 Then $u^+$ solves the obstacle problem
 \begin{align}\label{eq:op+}
\begin{cases}
\min \{-\Delta u^+, u^+-\psi\}=0 & \text { in } D, \\
u^+=d & \text { on } \partial D,
\end{cases}
\end{align}
and $u^-$ satisfies
\begin{equation}\label{harmonic}
-\Delta u^-=0 \ \text{ in } B_L(0) \setminus \overline{D}, \quad  u^-=d \ \text { on } \partial D, \quad \mbox{ and }  u^-=0 \text { on } \partial B_L(0).
\end{equation}
By the maximum principle, we have
 \begin{equation}
 \label{important simple inequality}
 u^+ > d \quad \mbox{  in }D.
 \end{equation}
In particular, the transmission conditions on the interface $\partial D$ are written as
\begin{equation}
\label{transmission conditions}
u^+ = u^-\ (=d) \quad \mbox{ and } \quad \sigma_+ \partial_\nu u^+ = \sigma_- \partial_\nu u^- \quad \mbox{ on } \partial D,
\end{equation}
where  $\nu$ denotes the outward-pointing unit normal vector to $\partial D$.

\textbf{Step 2.} \emph{Auxiliary problems in symmetric domains.}
  By virtue of \eqref{ass:o2} and  \eqref{supported inside D}, we may choose a $\rho > 0$ satisfying
$$
\overline{\{ x \in \mathbb R^N : \psi(x) > 0\}} \subset B_\rho(0) \subset \overline{B_\rho(0)} \subset D.
$$
Let $u_\rho, u_L$ be the solutions of the following obstacle problems, respectively: 
\begin{align}\label{eq:op4}
\begin{cases}
\min \{-\Delta u_\rho, u_\rho-\psi\}=0 & \text { in } B_\rho(0), \\
u_\rho=0 & \text { on } \partial B_\rho(0);
\end{cases}
\end{align}
\begin{align}\label{eq:op5}
\begin{cases}
\min \{-\Delta u_L, u_L-\psi\}=0 & \text { in } B_L(0), \\
u_L=d & \text { on } \partial B_L(0),
\end{cases}
\end{align}
where $d$ is the positive constant given in \eqref{eq:dirichlet}.
Then,  from the comparison principle, \eqref{important simple inequality}, and Lemma \ref{le:connected1} below, we deduce 
\begin{align*}
\begin{cases}
u_{\rho} \leq u \leq u_{L} & \text { in } B_{\rho}(0), \\
u \leq u_{L} & \text { in } B_L(0).
\end{cases}
\end{align*}
Moreover, as in the proof of Theorem \ref{th:main1}, we observe that
\begin{equation}
\label{shape of auxiliary functions}
\mbox{both } u_{\rho} \mbox{ and }u_{L} \mbox{ are radially symmetric with respect to the origin and non-increasing in $|x|$.}
\end{equation}
For the coincidence sets 
\begin{align*}
\mathcal I &=\{x \in B_L(0): u(x)=\psi(x)\}, \\
\mathcal I_{\rho} &=\left\{x \in B_{\rho}(0): u_{\rho}(x)=\psi(x)\right\}, \\
\mathcal I_{L} &=\left\{x \in B_{L}(0): u_{L}(x)=\psi(x)\right\}
\end{align*}
the following inclusions hold: 
\begin{equation}
\label{three coincidence sets}
\mathcal I_{\rho} \supset  \mathcal I \supset \mathcal  I_{L}.
\end{equation}
Introducing the notation  
\begin{equation}
\label{summit space}
\mathcal I_*:=\{ x \in \mathcal  I_{L} : u_L(x)=\max \psi \} \quad  \mbox{ and } \quad  r_*:= \max\{ |x| : x \in \mathcal I_* \},
\end{equation}
we observe that $\mathcal I_*$ is radially symmetric with respect to the origin and $0\le r_* < \rho$. Remark that 
\begin{equation}\label{shape of the summit of the solution}
u(x) \begin{cases}
 \le u_L(x)< \max \psi &\mbox{ if } r_* < |x| \le L,\\
 = u_L(x)= \max \psi &\mbox{ if } |x| \le r_*.
 \end{cases}
\end{equation}

\textbf{Step 3.} \emph{Penalized problems.} The radially symmetric function $u_\rho$ can be extended as a continuous and non-increasing function in $|x|$ over $\mathbb R^N$   by setting $u_\rho=0$ in $\mathbb R^N\setminus \overline{B_\rho(0)}$.
Then, we also have
$$
u_{\rho} \leq u   \text { in } B_L(0).
$$
Hence, we notice that, even if the obstacle function $\psi$ is replaced by $u_\rho$, the solution $u$ of problem \eqref{eq:op2} does not changes.
For $0<\varepsilon < 1$, let $v^\varepsilon \in C^2(\overline{D})$ be the solution of the penalized problem
\begin{align}\label{eq:penalized problems}
\begin{cases}
-\Delta v^\eps=\beta_\varepsilon(u_\rho-v^\eps) & \text { in } D, \\
v^\eps=d & \text { on } \partial D,
\end{cases}
\end{align}
where $\beta_\varepsilon(t) = \beta(t/\varepsilon)$ with $\beta \in C^2(\mathbb R)$ satisfying
$$
\beta(t)=0 \mbox{ for } t \le 0,\  \beta(t)>0 \mbox{ for } t > 0,\  \beta(t) \mbox{ grows linearly for } t \gg1,\ \beta^\prime\ge 0,  \mbox{ and } \ \beta^{\prime\prime}\ge 0 \mbox{ in }\mathbb R.
$$
By the maximum principle, we have
\begin{equation}\label{approximate solutions in D}
v^\varepsilon > d \quad \mbox{ in } D.
\end{equation}
The family of solutions $\{ v^\varepsilon\}_{\eps >0}$ of the penalized problems \eqref{eq:penalized problems} converges to the solution $u$ in $C^1(\overline{D})$ as $\varepsilon \to 0^+$ (we refer to \cite[Section 2, pp. 209--217]{MR4249421} or the books \cite{MR0679313, MR1786735, MR1094820} for details).

\textbf{Step 4.} \emph{The method of moving planes.} We apply  the method of moving planes due to Serrin (see \cite{MR0544879,MR1463801,MR333220,MR1808026}) to our problem in order to show that $D$ must be a ball centered at the origin. We follow the presentation in \cite{MR4517773} with the aid of the approximate solutions $\{ v^\varepsilon\}_{\eps >0}$.

Let $\gamma$ be a unit vector in $\mathbb R^N,$ $\lambda\in\mathbb R,$ and let $\pi_\lambda$ be the hyperplane $x\cdot\gamma=\lambda,$ where $x\cdot\gamma$ denotes the Euclidean inner product of $x$ and $\gamma$.
For large $\lambda,$ $\pi_\lambda$ is disjoint from $\overline{D}$; as $\lambda$ decreases,
$\pi_\lambda$ intersects $\overline{D}$ and cuts off from $D$ an open cap $D_\lambda= \{ x \in D : x\cdot\gamma > \lambda\}$.
Let us denote by $D^\lambda$ the reflection
of $D_\lambda$ with respect to the plane $\pi_\lambda$. Then, $D^\lambda$ is contained in $D $ at the beginning, and remains in $D$ until one of the following events occurs:
\begin{enumerate}
\item[\textbf{(i)}] $\lambda = 0;$
\item[\textbf{(ii)}] For some $\lambda > 0$, $D^\lambda$ becomes internally tangent to $\partial D$
at some point $p\in\partial D\setminus \pi_\lambda;$
\item[\textbf{(iii)}] For some $\lambda > 0$, $\pi_\lambda$ reaches a position where it is orthogonal to $\partial D$
at some point $q\in\partial D\cap\pi_\lambda$ and the direction $\gamma$ is not tangential to $\partial D$ at every point on $\{ x \in \partial D : x\cdot\gamma > \lambda\}$.
\end{enumerate}

If event \textbf{(i)} occurs for every direction $\gamma$, then $D$ must be a ball centered at the origin and hence $u$ is radially symmetric with respect to the origin.
Therefore, by supposing that either \textbf{(ii)} or \textbf{(iii)} occurs for some direction $\gamma$, we shall derive a contradiction. 

Let us suppose that either \textbf{(ii)} or \textbf{(iii)}  occurs for $\lambda=\lambda_* > 0$. By a rotation of coordinates, we may assume that $\gamma = (1,0,\dots,0)$ and the plane $\pi_{\lambda_*}$ is represented by $x_1=\lambda_*$ with $x=(x_1,\dots,x_N)$. Let us denote by $E, F$ the reflections of $\{ x \in B_L(0)\setminus\overline{D} : x_1 > \lambda_* \}, \{ x \in B_L(0) : x_1 > \lambda_* \}$ with respect to $\pi_{\lambda_*}$, respectively. Let $\Sigma$ be the connected component of $F\cap\{ x \in B_L(0)\setminus\overline{D} : x_1 <\lambda_* \}$ whose boundary contains the points $p$ and $q$ in the respective cases \textbf{(ii)} and \textbf{(iii)}. Since $D^{\lambda_*}\subset D$, we have $\Sigma\subset E$.

Let us denote by $x^{\lambda_*}$ the reflection of a point $x \in \mathbb R^N$ with respect to the $\pi_{\lambda_*}$, i.e. 
$$
x^{\lambda_*} = (2\lambda_*-x_1, x_2, \dots, x_N) \quad  \mbox{ for } x = (x_1,x_2,\dots,x_N).
$$
Let us introduce the functions $w^\pm=w^\pm(x)$ and $w^\varepsilon=w^\varepsilon(x)$: 
\begin{align*}
\begin{cases}
w^+(x) = u^+(x)-u^+(x^{\lambda_*} )\ &\mbox{ for } x \in \overline{D^{\lambda_*}},\\
w^-(x) = u^-(x)-u^-(x^{\lambda_*} )\ &\mbox{ for } x \in \overline{\Sigma},\\
w^\varepsilon(x) =v^\varepsilon(x)-v^\varepsilon(x^{\lambda_*} )\ &\mbox{ for } x \in \overline{D^{\lambda_*}}.
\end{cases}
\end{align*}
Then, since $\lambda_* > 0$, by virtue of Lemma \ref{le:connected1}, we notice that 
\begin{equation}\label{never vanish}
w^-\not\equiv 0\quad \mbox{ in } \Sigma.
\end{equation}
Hence, by observing that 
$$
\Delta w^-= 0\ \mbox{ in }  \Sigma\quad  \mbox{ and } \quad  w^- \ge 0 \ \mbox{ on } \partial \Sigma,
$$
the maximum principle yields 
\begin{equation}
\label{strict positivity of w-}
w^- > 0\quad \mbox{ in }\Sigma. 
\end{equation}
For $x \in D^{\lambda_*}$, since $u_\rho(x) \ge u_\rho(x^{\lambda_*})$,  we compute
\begin{align*}
-\Delta w^\varepsilon(x) &= \beta_\varepsilon(u_\rho(x)-v^\varepsilon(x)) - \beta_\varepsilon(u_\rho(x^{\lambda_*})-v^\varepsilon(x^{\lambda_*})) \\
&\ge \beta_\varepsilon(u_\rho(x)-v^\varepsilon(x)) - \beta_\varepsilon(u_\rho(x)-v^\varepsilon(x^{\lambda_*}))\\
&=-\int_0^1\beta^\prime_\varepsilon(u_\rho(x)-(1-\theta)v^\varepsilon(x)-\theta v^\varepsilon(x^{\lambda_*})) \dd\theta \ w^\varepsilon(x).
\end{align*}
Then, by deducing from \eqref{eq:penalized problems} and \eqref{approximate solutions in D} that $w^\varepsilon \ge 0\mbox{ on }\partial D^{\lambda_*}$, we conclude from the maximum principle that
$w^\varepsilon\ge 0$ in $D^{\lambda_*}$. Thus, letting $\varepsilon \to 0^+$ yields that
\begin{equation}
\label{nonnegaticity of w+}
w^+ \ge 0\quad \mbox{ in }D^{\lambda_*}. 
\end{equation} 
 Moreover, since $\lambda_* > 0$, by virtue of \eqref{shape of the summit of the solution}, we note that  $w^+ \not\equiv 0$ in $D^{\lambda_*}$.

From now on, we distinguish the two cases \textbf{(ii)} and \textbf{(iii)}. 

\noindent\textbf{Case (ii)}. The first equality in \eqref{transmission conditions} yields that $w^+(p)=w^-(p)=0$. Then, it follows from \eqref{strict positivity of w-}, \eqref{nonnegaticity of w+}, and Hopf's boundary point lemma that
\begin{equation}
\label{signs of normal derivatives}
\partial_\nu w^+(p) \le 0 < \partial_\nu w^-(p),
\end{equation}
where we used the fact that $\nu$ is the outward-pointing unit normal vector to $\partial D$ as well as the inward-pointing unit normal vector to $\partial\Sigma$. Thus, from the definition of $w^\pm$, we have that
$$
\partial_\nu u^+(p)\le \partial_\nu(u^+(x^{\lambda_*} ))|_{x=p}\quad \mbox{ and }\quad \partial_\nu u^-(p)> \partial_\nu(u^-(x^{\lambda_*} ))|_{x=p}.
$$
As in  \cite[Eq. (3.7), p. 4]{MR4517773}, the reflection symmetry with respect to the plane $\pi_{\lambda_*}$ yields that
$$
\partial_\nu (u^\pm(x^{\lambda_*} ))|_{x=p} = \partial_\nu u^\pm(p^{\lambda_*})
$$
and, hence, 
\begin{equation}
\label{relationship between normal derivatives}
\partial_\nu u^+(p)\le \partial_\nu u^+(p^{\lambda_*} )\quad \mbox{ and }\quad \partial_\nu u^-(p)> \partial_\nu u^-(p^{\lambda_*} ).
\end{equation}
On the other hand, the second equality in \eqref{transmission conditions} yields that
$$
 \sigma_+ \partial_\nu u^+(p) = \sigma_- \partial_\nu u^-(p) \quad \mbox{ and }\quad   \sigma_+ \partial_\nu u^+(p^{\lambda_*} ) = \sigma_- \partial_\nu u^-(p^{\lambda_*} ),
 $$
 which contradicts \eqref{relationship between normal derivatives}.
 
\noindent\textbf{Case (iii)}.  As in  \cite[p. 5]{MR4517773}, by a rotation of coordinates, we may assume that 
 $$
 \nu(q) =(0,\dots,0,1).
 $$
 Since $\partial D$ is of class $C^2$, there exists a $C^2$ function $\varphi : \mathbb R^{N-1}\to\mathbb R$ such that, in a neighborhood of $q$,  $\partial D$ is represented  as a graph $x_N=\varphi(\hat{x})$ where $\hat{x}=(x_1,\dots,x_{N-1}) \in \mathbb R^{N-1}$, where
 $$
 \nabla \varphi (\hat{q}) = 0 \quad \mbox{ and } \quad \nu =\frac 1{\sqrt{1+|\nabla\varphi|^2}}(-\nabla\varphi,1).
 $$
 Then, by proceeding as in  \cite[pp. 5--6]{MR4517773} with the aid of \eqref{transmission conditions}, the harmonicity of $w^-$, \eqref{strict positivity of w-}, and \eqref{nonnegaticity of w+}, we are able to get a contradiction between the transmission conditions \eqref{transmission conditions} and  Serrin's corner point lemma (see \cite[Lemma S, p. 214]{MR0544879} or \cite[Serrin's Corner Lemma, p. 393]{MR1463801}).
\end{proof}

\begin{lemma}\label{le:connected1}
The set $B_L(0) \setminus \overline{D}$ is connected and $0 < u^- < d \mbox{ in } B_L(0) \setminus \overline{D}$.
\end{lemma}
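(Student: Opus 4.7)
The plan is to establish connectedness by contradiction, using the transmission condition \eqref{transmission conditions} together with Hopf's boundary point lemma, and then to obtain the two-sided bound on $u^-$ as a standard consequence of the maximum principle.

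For connectedness, suppose toward contradiction that $B_L(0) \setminus \overline{D}$ has more than one connected component. Since $\overline{D} \subset B_L(0)$, a thin annular neighborhood of $\partial B_L(0)$ inside $B_L(0)$ is open, connected, and disjoint from $\overline{D}$, and hence lies in a single component. Any further component $H$ is then a bounded open set satisfying $\partial H \subset \partial D$ --- a ``hole'' of $D$. On $H$ the function $u^-$ is harmonic with constant boundary value $d$, so the maximum principle forces $u^- \equiv d$ in $\overline{H}$, and therefore $\partial_\nu u^- = 0$ along $\partial H$. On the other hand, \eqref{important simple inequality} (whose proof does not invoke this lemma) gives $u^+ > d$ strictly in $D$; the support condition \eqref{supported inside D} also ensures that a neighborhood of $\partial D$ inside $D$ lies in the non-coincidence set, where $u^+$ is harmonic. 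The interior ball condition furnished by $\partial D \in C^{2,\alpha}$ then lets Hopf's boundary point lemma yield $\partial_\nu u^+ < 0$ on $\partial D$. Evaluating the transmission condition $\sigma_+ \partial_\nu u^+ = \sigma_- \partial_\nu u^-$ at any point of $\partial H \subset \partial D$ therefore produces a contradiction, since the left-hand side is strictly negative while the right-hand side vanishes.

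Once $B_L(0) \setminus \overline{D}$ is known to be connected, $u^-$ is harmonic on it with continuous Dirichlet data equal to $d$ on $\partial D$ and $0$ on $\partial B_L(0)$; the weak maximum principle gives $0 \le u^- \le d$, and the strong maximum principle upgrades both inequalities to strict ones because $u^-$ is nonconstant. The main technical care I foresee is in the geometric setup --- verifying that any additional component of $B_L(0) \setminus \overline{D}$ must be a hole bounded only by $\partial D$ --- and in matching the orientation of $\nu$ (outward from $D$, thus inward to such a hole) so that Hopf's inequality and the transmission condition combine with consistent signs.
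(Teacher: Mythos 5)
Your proposal is correct and follows essentially the same route as the paper's proof: identify any extra component as a ``hole'' bounded by $\partial D$, use the maximum principle to force $u^-\equiv d$ there so $\partial_\nu u^-=0$, then contradict the transmission condition \eqref{transmission conditions} via \eqref{important simple inequality} and Hopf's boundary point lemma, and finally obtain $0<u^-<d$ from the maximum principle on the connected set. The only cosmetic difference is that the paper deduces $\partial_\nu u^+=0$ from the transmission condition and then cites Hopf as the contradiction, whereas you apply Hopf first to get $\partial_\nu u^+<0$ and then compare with $\partial_\nu u^-=0$; your annular-neighborhood observation is a slightly more explicit way of seeing that only one component meets $\partial B_L(0)$.
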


\begin{proof}
Since $d > 0$ in \eqref{eq:dirichlet} and $u=0$ on  $\partial B_L(0)$, there is only one component of  $B_L(0) \setminus \overline{D}$ whose closure intersects $\partial B_L(0)$. Let us suppose, for the sake of finding a contradiction, that another component $\omega$ exists. Then, $\partial\omega\subset\partial D$ and $u = d$ on $\partial\omega$. Since the coincidence sets are contained in $D$, we have, in particular, $\Delta u = 0$ in $B_L(0) \setminus \overline{D}$.  Hence, by the maximum principle, $u \equiv d$ in $\omega$.  Since $\partial D$ consists of regular hypersurfaces, $\partial\omega$ is a part of them. Therefore, \eqref{transmission conditions} implies that  $\partial_\nu u^+ = 0$ on $\partial\omega$, which contradicts \eqref{important simple inequality} and Hopf's boundary point lemma. 

Having shown that $B_L(0) \setminus \overline{D}$ is connected, the latter half of the claim follows from the maximum principle.
\end{proof}


\section{Proof of Theorem \ref{th:main3}}
\label{sec:proof3}

\begin{proof}[Proof of Theorem \ref{th:main3}] By replacing the boundary condition of \eqref{eq:op2} by that of \eqref{eq:op3} at infinity, we can prove  Theorem \ref{th:main3} along the same steps as Theorem \ref{th:main2}. Indeed,  in {Step 1}, $u^-$ in \eqref{upm} and  the boundary condition on $\partial B_L(0)$ in  \eqref{harmonic} may be replaced by 
$$
 u^-=u \ \mbox{ in }\mathbb R^N\setminus D \quad \mbox{ and  }\quad u^- \to 0 \ \mbox{ as } |x| \to \infty,
 $$
 respectively. Also, Lemma  \ref{le:connected1} may be replaced by the following result. 
 
\begin{lemma}\label{le:connected2}
The set $\mathbb R^N \setminus \overline{D}$ is connected and $0 < u^- < d \mbox{ in } \mathbb R^N \setminus \overline{D}$.
\end{lemma}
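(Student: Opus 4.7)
The plan is to follow the structure of the proof of Lemma \ref{le:connected1} line by line, with the role of the outer boundary $\partial B_L(0)$ replaced by the decay condition $u^-\to 0$ at infinity. Since $D$ is bounded, $\mathbb R^N\setminus\overline{D}$ admits a unique unbounded connected component, which I denote by $\omega_\infty$. On $\omega_\infty$ the function $u^-$ is harmonic, equals $d$ on the portion of $\partial D$ that bounds $\omega_\infty$, and tends to $0$ at infinity; applying the maximum principle on the bounded annular sets $B_R(0)\setminus\overline{D}$ with $\overline D\subset B_R(0)$ and letting $R\to\infty$ (exploiting $N\ge 3$, so that the exterior Dirichlet problem is well posed) yields $0<u^-<d$ in $\omega_\infty$.

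The remaining task is to rule out the existence of any further connected component. Suppose, for the sake of contradiction, that $\omega$ is another component of $\mathbb R^N\setminus\overline{D}$. Then $\omega$ must be bounded, its boundary satisfies $\partial\omega\subset\partial D$, and the Dirichlet condition \eqref{eq:dirichlet} gives $u^-\equiv d$ on $\partial\omega$. Because $\mathrm{supp}(\max\{\psi,0\})\subset D$ by \eqref{supported inside D}, the coincidence set is compactly contained in $D$, so $u^-$ is harmonic on $\omega$; applying the maximum principle to the bounded harmonic function $u^-$ on $\omega$ forces $u^-\equiv d$ there, whence $\partial_\nu u^-=0$ on $\partial\omega$.

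The contradiction is then derived exactly as in Lemma \ref{le:connected1}. The transmission condition $\sigma_+\partial_\nu u^+=\sigma_-\partial_\nu u^-$ from \eqref{transmission conditions} forces $\partial_\nu u^+=0$ on $\partial\omega$, while the strict interior inequality $u^+>d$ in $D$ given by \eqref{important simple inequality} (whose proof relies only on the maximum principle inside $D$ and is unaffected by the outer geometry), together with Hopf's boundary point lemma applied to $u^+-d$ in a neighborhood of $\partial\omega$ inside $D$ (where $u^+$ is harmonic, since the coincidence set stays compactly inside $D$), yields $\partial_\nu u^+<0$ on $\partial\omega$, which is the desired contradiction. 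The only genuinely subtle point I foresee is the verification of $0<u^-<d$ on $\omega_\infty$, which is exactly where $N\ge 3$ enters: in two dimensions the exterior Dirichlet problem with constant nonzero boundary data and decay at infinity admits no solution, which is also the reason Theorem \ref{th:main3} itself is restricted to $N\ge 3$.
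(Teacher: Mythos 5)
Your argument is correct and is exactly the adaptation the paper intends: the paper gives no separate proof of Lemma~\ref{le:connected2}, instead stating in the proof of Theorem~\ref{th:main3} that Lemma~\ref{le:connected1} "may be replaced by" it, with the sphere $\partial B_L(0)$ replaced by the decay condition at infinity. Your treatment of the unbounded component via the maximum principle on expanding annuli and the contradiction for a putative bounded component (harmonicity forces $u^-\equiv d$ there, whence $\partial_\nu u^-=0$, contradicting the transmission condition, $u^+>d$, and Hopf's lemma) mirrors the paper's proof of Lemma~\ref{le:connected1} step for step. One small remark: the parenthetical appeal to $N\ge3$ is not actually needed for the maximum-principle estimate $0\le u^-\le d$ you describe, which works for any $N$ once a solution with $u^-\to 0$ at infinity is given; as the paper notes after Theorem~\ref{th:main3}, the dimensional restriction enters earlier, in guaranteeing that problem~\eqref{eq:op3} has a solution at all.
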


In {Step 2}, $u_L$ solving \eqref{eq:op5} may be replaced by $u_\infty$ solving
\begin{align}\label{eq:op6}
\begin{cases}
\min \{-\Delta u_\infty, u_\infty-\psi\}=0 & \text { in } \mathbb R^N, \\
u_\infty \to d & \text { as } |x| \to \infty.
\end{cases}
\end{align}
The other some minor replacements corresponding to the above changes will eventually give a contradiction in each case.
\end{proof}

\vspace{5mm}
\section*{Acknowledgments}

N. De Nitti is a member of the Gruppo Nazionale per l'Analisi Matematica, la Probabilit\'a le loro Applicazioni (GNAMPA) of the Istituto Nazionale di Alta Matematica (INdAM). He has been partially supported by the Alexander von Humboldt Foundation and by the TRR-154 project of the Deutsche Forschungsgemeinschaft (DFG). Moreover, he acklowledges the kind hospitality of Tohoku Univeristy, where part of this work was carried out.

S. Sakaguchi has been supported by the Grants-in-Aid
for Scientific Research (B) and (C) (\# 18H01126 and \# 22K03381)  of
Japan Society for the Promotion of Science.

The authors would like to thank Professor Kazuhiro Ishige for suggesting that a stability result, such as Theorem \ref{th:main1-2}, follows directly from the proof of Theorem \ref{th:main1}.

\vspace{5mm}

\bibliographystyle{abbrv}
\bibliography{StarOP-ref.bib}

\vspace{3mm}
\vfill

\end{document}